\newtheorem{theorem}{Theorem}
\newtheorem{cor}[theorem]{Corollary}
\newtheorem{prop}[theorem]{Proposition}
\begin{document}
\title[Cusp excursions for the earthquake flow]{Cusp excursions for the earthquake flow on the once-punctured torus}
\date{}
\author{Ser-Wei Fu}

\begin{abstract}
In this paper we study the typical speed of a generic earthquake trajectory leaving compact sets in the moduli space of the once-punctured torus. Mirzakhani showed that the earthquake flow is measurably equivalent to the horocyclic flow, which has been studied extensively. Our main result shows that the earthquake flow and the horocyclic flow behave very differently in cusp excursions. In particular, we prove a relation between the systole function and continued fractions and discuss the cusp excursions of earthquake trajectories.
\end{abstract}

\maketitle

\section{Introduction}\label{INTRO}

Throughout the paper we will let $S=S_{1,1}$ be the once-punctured torus, $\mathcal{M}(S)$ be the moduli space, and $\mathcal{T}(S)$ be the Teichm\"{u}ller space. The motivation of the paper comes from the result of Mirzakhani in \cite{Mir} showing that the earthquake flow is ergodic with respect to the natural finite measure $\nu(S)$ coming from the Weil-Petersson measure on the bundle $\mathcal{P}^1\mathcal{M}(S)$ of unit length geodesic measured laminations. We use $E_t(X,\lambda)$ to denote the earthquake flow for time $t$ along the measured lamination $\lambda$ and $\ell_{sys}(X,\lambda)$ to be the length of the systole of $X$ (See Section~\ref{BG} for definitions). The systole length $\ell_{sys}(X,\lambda)$ is the same as $\ell_{sys}(X)$, with the prior in the bundle context and the latter in $\mathcal{M}(S)$.

The study of cusp excursions for the earthquake flow is aimed at understanding the constant $C_f(X,\lambda)$ defined with respect to a continuous increasing function $f$ by
\[
C_f(X,\lambda) = \limsup_{t\to\infty} \frac{-\log \ell_{sys}(E_t(X,\lambda))}{f(t)}.
\]
Ergodicity implies that $C_f(X,\lambda)$ is a constant for almost every $([X],\lambda)$ with respect to $\nu(S)$. The goal is to find the threshold function $f$ such that $C_f$ is a nonzero number that depends only on the surface.

One of the classical approaches to cusp excursion is the logarithmic laws initiated by Sullivan \cite{Sul}. The Borel-Cantelli lemma and a volume estimate of the thin part of the moduli space can be used to obtain a discrete upper bound for $C_{\log(t)}$. More specifically, for a general surface $S_g$ if there exists a continuous function $V$ such that the measure of the $\varepsilon$-thin part of $\mathcal{P}^1\mathcal{M}(S_{g})$ is $O(V(\varepsilon))$, then we obtain 
\[
\limsup_{n\to\infty} \frac{-\log \ell_{sys}(E_{n}(X,\lambda))}{\log n} \leq \lim_{\varepsilon \to 0}\frac{\log(V(\varepsilon))}{\log \varepsilon}, \text{ where } n\in\mathbb{N},
\]
for almost every $([X],\lambda) \in \mathcal{P}^1\mathcal{M}(S_{g})$ with respect to $\nu(S)$, see Section~\ref{UPPER}.

In \cite{Mir}, Mirzakhani showed that the earthquake flow is measurably equivalent to the horocyclic flow, which led to the ergodicity of the earthquake flow. The logarithm law of the horocyclic flow by Athreya in \cite{Ath} states that the discrete upper bound above is actually an equality even in the continuous case. In other words, the constant $C_{\log(t)}$ for the horocyclic flow is equal to 1/2 almost everywhere. However, the earthquake flow fails to satisfy the same property. Our main result shows that $C_{\sqrt{t}}$ is infinity almost everywhere for the once-punctured torus. 

\begin{theorem}\label{LEMMA}
Let $S$ be the once-punctured torus. For almost every $([X],\lambda) \in \mathcal{P}^1\mathcal{M}(S)$ with respect to $\nu(S)$, we have
\[
\limsup_{t\to\infty} \frac{-\log \ell_{sys}(E_t(X,\lambda))}{\sqrt{t}} = \infty.
\]
\end{theorem}
\newtheorem*{mainlemma}{Theorem~\ref{LEMMA}} 

The theorem echoes the result shown by Minsky and Weiss \cite[Proposition~8.1]{MinWei} stating that in the case of the once-punctured torus, the trajectories of the horocyclic flow and the earthquake flow can be infinitely far apart. As a corollary of the proof of the main theorem, we identify a family of earthquake trajectories associated to the golden ratio that goes arbitrarily deep into the thin part with the slowest possible speed.

We study the systole function along earthquake trajectories by explicitly parametrizing the space of hyperbolic metrics, the simple closed geodesics, and the space of measured laminations on the once-punctured torus. The systole functions on earthquake trajectories are closely related to the continued fraction expansion of \emph{slopes} that depends on $([X],\lambda)$, see Section~\ref{Case}. We show the result of the classical argument for cusp excursions in Section~\ref{UPPER} for completeness and prove the main theorem in Section~\ref{PROOF}. The proof is a straight forward computation of the limit supremum for any given $(X,\lambda)$ and the measure of the set that satisfies our statement. 

\noindent\textbf{Acknowledgement.} The author thanks Jayadev Athreya for suggesting the project as well as many useful conversations. The author is also grateful for discussions with Christopher Leininger, Matthew Stover, and Joseph Vandehey.

%%%%%%%%%%%%%%%%%%%%%%%%%%%%%%%%%%%%%%%%%%%%%%%%%%%%%%%%

\section{Background}\label{BG}

We use a similar choice of notation as in \cite{Mir}. Let $\mathcal{T}(S)$ be the Teichm\"{u}ller space, which is the space of hyperbolic metrics on $S$. The mapping class group $\operatorname{Mod}(S)$, the group of automorphisms of $S$ up to isotopy, acts on $\mathcal{T}(S)$ and the quotient is the moduli space $\mathcal{M}(S)$. The \textit{Weil-Petersson symplectic form} induces a finite measure on $\mathcal{M}(S)$. 

Let $\mathcal{ML}(S)$ denote the space of \textit{measured laminations} on $S$ (see \cite{FaLaPo}). We use notation similar to \cite{Mir} and a thorough description of properties of $\mathcal{ML}(S)$ can be found in \cite{FaLaPo}. The space $\mathcal{ML}(S)$ carries a mapping class group invariant volume form $\mu_{Th}$ introduced by Thurston. Given $X \in \mathcal{T}(S)$ and $\lambda \in \mathcal{ML}(S)$, we denote the length of the lamination $\lambda$ in the hyperbolic metric $X$ by $\ell_\lambda(X)$. Finally we let $\mathcal{PT}(S) = \mathcal{T}(S) \times \mathcal{ML}(S)$ be the bundle of geodesic measured lamination over $\mathcal{T}(S)$, $\mathcal{P}^1\mathcal{T}(S)$ be the unit sub-bundle with respect to the hyperbolic length of $\lambda$, and $\mathcal{P}^1\mathcal{M}(S)$ be the quotient $\mathcal{P}^1\mathcal{T}(S) / \operatorname{Mod}(S)$. The earthquake-invariant measure on $\mathcal{P}^1\mathcal{M}(S)$ considered by Mirzakhani in \cite{Mir} is a finite measure denoted by $\nu(S)$.

The \emph{earthquake map} is a generalization of twisting along simple closed geodesics with respect to a hyperbolic metric on $S$, see \cites{Ker,Thu}. An earthquake along a simple closed curve, seen as a measured lamination, is defined to be a left twist of hyperbolic length 1. An earthquake along a measured lamination is defined to be the limit of the earthquake maps along a sequence of weight simple closed curves that converges to the measured lamination. The earthquake map is independent of the sequence you choose \cites{Ker,Thu} and can be extended into a flow \cite{Mir}. We denote the earthquake flow at time $t$ by $E_t(X,\lambda) \in \mathcal{PT}(S)$, where $E_t(X,\lambda) = (Y,\lambda)$ with $Y$ being the hyperbolic metric obtained by twisting $X$ along $\lambda$ for distance $t$.

The earthquake flow on $\mathcal{P}^1\mathcal{M}(S)$ is ergodic with respect to $\nu(S)$. This implies that almost every trajectory of the earthquake flow in $\mathcal{M}(S)$ is dense. Alternatively, we consider the systole length function $\ell_{sys}$ that maps $(X,\lambda) \in \mathcal{PT}(S)$ to the length of the shortest closed geodesic with respect to $X$. The ergodic property implies that
\[
\liminf_{t\to\infty} \ell_{sys} (E_t(X,\lambda)) = 0 \text{ for almost every } (X,\lambda) \in \mathcal{PT}(S).
\]

The Teichm\"{u}ller space of the once-punctured torus $\mathcal{T}(S_{1,1})$ is isometric to $\mathbb{H}^2$ and is parametrized via the \textit{Fenchel-Nielsen coordinate}. We fix a pair of generators for the fundamental group of $S_{1,1}$ that intersect once. Then every oriented simple closed geodesic corresponds to a pair of integers given by the algebraic intersection number with the generators. We denote an oriented simple closed geodesic by $\gamma(a,b)$, where $\operatorname{gcd}(a,b)=1$. 

The space of measured laminations $\mathcal{ML}(S_{1,1})$ is parametrized by the \textit{slope} (the ratio of $a$ and $b$) and the hyperbolic length with respect to $X$. We say that a measured lamination is rational if it is a weighted simple closed geodesic and irrational otherwise.

Let us setup the Fenchel-Nielsen coordinate. For $X \in \mathcal{T}(S_{1,1})$ and $\gamma = \gamma(a,b)$ a simple closed geodesic, the Fenchel-Nielsen coordinate of $X$ with respect to $\gamma$ is $(\ell_\gamma(X), \tau_\gamma(X))$, where $\tau_\gamma(X)$ is the twist parameter. We choose the twist parameter in the following manner. Let $\gamma(c,d)$ be an oriented simple closed geodesic such that $ad-bc=1$. The twist parameter $\tau_\gamma(X)=0$ if and only if $X$ is the hyperbolic metric that minimizes the length of $\gamma(c,d)$ for the fixed length $\ell_\gamma(X)$. The twist parameter is normalized such that integer valued twist parameters correspond to metrics obtained by Dehn twists about $\gamma$. The twist parameter $\tau_\gamma$ increases linearly along the earthquake trajectory of $E_t(X,\gamma)$.

%%%%%%%%%%%%%%%%%%%%%%%%%%%%%%%%%%%%%%%%%%%%%%%%%%%%%%%%

\section{The punctured torus}\label{Case}

In this section we prove that the systole length function on a earthquake trajectory can be approximated via continued fraction expansions of irrational slopes.

\begin{prop}\label{estimate}
Fix a simple closed geodesic $\gamma$. If $X \in \mathcal{T}(S_{1,1})$ has Fenchel-Nielsen coordinate $(L,p/q)$ with respect to $\gamma$, where $\operatorname{gcd}(p,q)=1$ and $p/q\in(0,1)$, then 
\[
C_1(L) e^{-L/(2q)} < \ell_{sys} (X) < C_2(L) e^{-L/(2q)},
\]
where $C_1(L),C_2(L)$ both limits to $4$ when $p,q$ are fixed and $L$ goes to $\infty$.
\end{prop}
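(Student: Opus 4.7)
The plan is to identify the systole as a specific simple closed curve and compute its length via a $\mathrm{PSL}(2,\mathbb R)$ trace calculation, controlling all competitors with the collar lemma. I fix the Fenchel-Nielsen holonomy $\rho:\pi_1(S_{1,1})=\langle A,B\rangle\to\mathrm{PSL}(2,\mathbb R)$ by $\rho(A)=\mathrm{diag}(e^{L/2},e^{-L/2})$ (representing $\gamma$) and $\rho(B)=B_\tau:=A^{\tau}B_0$, where $B_0=\begin{pmatrix}c&s\\ s&c\end{pmatrix}$ with $c=\coth(L/2)$ and $s=\operatorname{csch}(L/2)$ is the symmetric choice forced by the cusp relation $\operatorname{tr}\rho([A,B])=-2$. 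The length of a simple closed curve $\alpha$ realized by a word $W(A,B)$ is then $\ell_\alpha(X)=2\operatorname{arccosh}(|\operatorname{tr}\rho(W)|/2)$, and the algebraic identity $\sigma_x A^g=A^{-g}\sigma_x$ (with $\sigma_x=\begin{pmatrix}0&1\\ 1&0\end{pmatrix}$ and $B_0=cI+s\sigma_x$) lets me expand products cleanly.

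For the upper bound I take $\gamma(-p,q)$ as the systole candidate, realized by the Christoffel word $W$ of $A$-exponent sum $-p$ and $B$-exponent sum $q$. Substituting $B\to A^{p/q}B_0$ at $\tau=p/q$ kills the total $A$-exponent of $W(A,A^{p/q}B_0)$, so after cyclic permutation the matrix takes the form $\prod_{i=1}^q(B_0 A^{g_i})$ with $\sum g_i=0$. Expanding $\prod(cI+s\sigma_x)A^{g_i}$ and consolidating the $\sigma_x$ factors using the identity above, only terms with an even number of $\sigma_x$-insertions contribute to the trace: the zero-insertion term gives $2c^q$, and the two-insertion terms combine to $2c^{q-2}s^2\sum_{1\le i<j\le q}\cosh(S_i^{j-1}L)$, where $S_i^{j-1}=g_i+\cdots+g_{j-1}$. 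The combinatorial crux is that $\{S_1^k\}_{k=0}^{q-1}$ realizes the orbit $\{kp/q\bmod 1\}$ under a natural lift, so $\max_{i<j}|S_i^{j-1}|=(q-1)/q$ is attained at a unique pair. Combined with $s^2=4e^{-L}(1+o(1))$ and $c^q=1+o(1)$, this gives $\operatorname{tr}\rho(W)=2+4e^{-L/q}+o(e^{-L/q})$, hence $\ell_{\gamma(-p,q)}(X)=4e^{-L/(2q)}(1+o(1))$, providing the upper bound.

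For the lower bound on the systole I invoke the collar lemma applied to $\gamma(-p,q)$ with the length estimate just obtained: the embedded collar has half-width $\operatorname{arcsinh}(1/\sinh(\ell_{\gamma(-p,q)}/2))=L/(2q)+O(1)$. Any primitive simple closed curve $\gamma(a,b)$ distinct from $\gamma(-p,q)$ has positive geometric intersection with $\gamma(-p,q)$, since the complement of a non-separating simple closed curve in $S_{1,1}$ is a pair of pants with a cusp, admitting no essential non-peripheral simple closed curves. Hence $\ell_{\gamma(a,b)}(X)\ge L/q+O(1)$, which dominates $e^{-L/(2q)}$ for $L$ large. Thus the systole is realized by $\gamma(-p,q)$ and the proposition follows with $C_1(L),C_2(L)\to 4$.

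The hardest step will be the combinatorial lemma identifying the unique maximum $(q-1)/q$ of the partial sums of the Christoffel word; this relies on the three-distance theorem for $\{kp/q\bmod 1\}$ together with the Sturmian balance property, and I would prove it either by induction on the Farey tree or directly from the cutting-sequence description of Christoffel words. A secondary technical issue is bounding the higher-order $O(s^{2m})$ terms in the trace expansion: each additional pair of $\sigma_x$-insertions contributes at most $s^2 e^{(q-1)L/q}=O(e^{-L/q})$, so the full series is dominated by its leading $O(e^{-L/q})$ term and preserves the coefficient $4$ in the main asymptotic.
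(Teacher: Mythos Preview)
Your argument is correct and complete in outline, but it follows a genuinely different route from the paper. The paper never writes down holonomy matrices or Christoffel words; instead it argues geometrically by \emph{switching the roles of $\gamma$ and the candidate systole $\alpha$}. Cutting along $\alpha$ (the curve meeting $\gamma$ exactly $q$ times) turns $\gamma$ into $q$ arcs, each of length at least the collar width $R(a)=2\log\frac{e^{a/2}+1}{e^{a/2}-1}$; a piecewise-geodesic representative of $\gamma$ built from these perpendiculars plus short slides along $\alpha$ gives the two-sided bound $qR(a)<L<qR(a)+qa/2$, which one then inverts to obtain $a\sim 4e^{-L/(2q)}$. Uniqueness of the short curve is disposed of in one line (only $\alpha$ has length tending to zero).

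What each approach buys: the paper's argument is shorter and entirely soft---no trace algebra, no three-distance theorem, no Sturmian combinatorics---at the cost of being slightly informal about the upper inequality and about why $\alpha$ is the unique short curve. Your approach is more work but is fully quantitative: the trace expansion makes the constant $4$ transparent, the collar-lemma step rigorously rules out competitors with an explicit $L/q+O(1)$ lower bound, and the method would in principle yield higher-order asymptotics. The one place you should tighten is the higher-insertion bound: for $|T|=2m$ the exponent in the $\cosh$ can be as large as $m(q-1)/q$, not $(q-1)/q$, so the correct estimate is $s^{2m}\cosh(\cdot)\lesssim e^{-mL/q}$ term-by-term, which still sums to $o(e^{-L/q})$ as you need.
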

\newtheorem*{estimateprop}{Proposition~\ref{estimate}} 

\begin{proof}
We consider a once-punctured torus with a hyperbolic metric $X \in \mathcal{T}(S_{1,1})$. Figure~\ref{domain} below shows a fundamental domain in the upper half plane that is chosen canonically with a given Fenchel-Nielsen coordinate $(L,p/q)$ with respect to $\gamma$. 

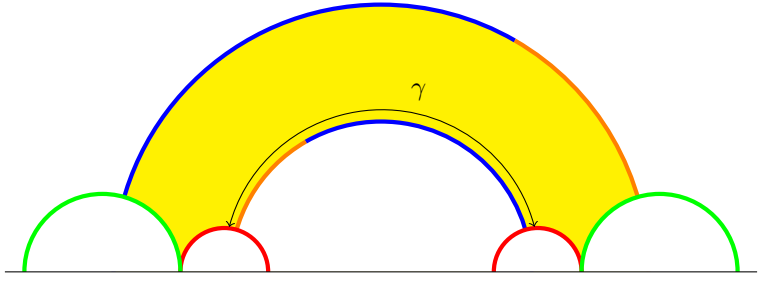
\begin{figure}[ht]
\begin{tikzpicture}[scale=2]

\draw[fill,yellow] (1.775,0) arc (0:180:1.775);
\draw[ultra thick,orange] (1.77778,0) arc (0:60:1.77778);
\draw[ultra thick,blue] (-1.77778,0) arc (180:60:1.77778);

\draw[fill,white] (0.99,0) arc (0:180:0.99);
\draw[ultra thick,blue] (1,0) arc (0:120:1);
\draw[ultra thick,orange] (-1,0) arc (180:120:1);

\draw[fill,white] (1.33333,0) arc (0:180:0.291666);
\draw[fill,white] (1.33333,0) arc (180:0:0.518518);
\draw[fill,white] (-1.33333,0) arc (180:0:0.291666);
\draw[fill,white] (-1.33333,0) arc (0:180:0.518518);
\draw[ultra thick,red] (1.33333,0) arc (0:180:0.291666);
\draw[ultra thick,green] (1.33333,0) arc (180:0:0.518518);
\draw[ultra thick,red] (-1.33333,0) arc (180:0:0.291666);
\draw[ultra thick,green] (-1.33333,0) arc (0:180:0.518518);

%\draw (0.25,0.69) node{$\gamma$};
\draw (-2.5,0) -- (2.5,0);
%\draw (0,0) -- (0,1.9);

\draw (0.25,1.2) node{$\gamma$};
\draw[<->] (1.02,0.3) arc (15:165:1.05);

\end{tikzpicture}
\caption{A fundamental domain for $X$ represented by $(L,p/q)$.}
\label{domain}
\end{figure}

The edges are identified by colors via hyperbolic isometries. The length ratio between the blue and the orange edges is equal to $p:q-p$. We use $R(L)$ to denote the length of the shortest geodesic arc with endpoints on $\gamma$. It is well known that $(R\circ R)(L)=L$ and 
\[
R(L) = 2\log \coth(L/4) = 2\log \left(\frac{e^{L/2}+1}{e^{L/2}-1}\right).
\]

For fixed $p/q \in (0,1)$, the systole function is realized by a simple closed curve that intersects $\gamma$ more than once when $L = \ell_\gamma(X)$ is large enough. We use the analogy from the flat torus case and look at the length of simple closed curves $\alpha$ that intersect $\gamma$ exactly $q$ times as seen in Figure~\ref{analogy}.

\begin{figure}[ht]
\begin{tikzpicture}[scale=2.5]

\draw[ultra thick,blue] (1,0) -- (3.5,0);
\draw[ultra thick,orange] (0,0) -- (1,0);
\draw[ultra thick,orange] (3.5,0.5) -- (2.5,0.5);
\draw[ultra thick,blue] (2.5,0.5) -- (0,0.5);
\draw (2,0.05) node{$\gamma$};
\draw[ultra thick,red] (0,0) -- (0,0.5);
\draw[ultra thick,red] (3.5,0) -- (3.5,0.5);

\draw[dotted] (1.75,0) -- (1.75,0.5);
\draw[dotted] (2.75,0) -- (2.75,0.5);
\draw[dotted] (0.25,0) -- (0.25,0.5);
\draw[dotted] (1.25,0) -- (1.25,0.5);
\draw[dotted] (2.25,0) -- (2.25,0.5);
\draw[dotted] (3.25,0) -- (3.25,0.5);
\draw[dotted] (0.75,0) -- (0.75,0.5);
\draw (1.7,0.25) node{$\alpha$};

\end{tikzpicture}
\caption{The flat torus analogy.}
\label{analogy}
\end{figure}
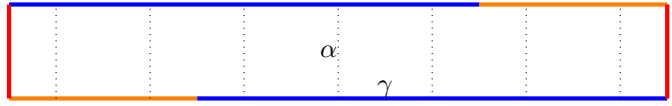

We obtain the following inequality involving $a$, the length of $\alpha$, with respect to $L$ which came from switching the role of $\gamma$ and $\alpha$ in the pictures. 
\[
q R(a) < L < q R(a) + \frac{qa}{2}.
\]
Since $\gamma$ consists of $q$ geodesic segments from $\alpha$ to $\alpha$, the left-hand-side is obtained from $R(a)$. The right-hand-side is obtained by considering a piecewise geodesic homotopic to $\gamma$. The piecewise geodesic goes along the geodesic segment that realizes the injectivity radius of $\alpha$ $q$ times and travel along $\alpha$ otherwise. 

Reorganizing the terms will get us
\[
e^{-a/4}\tanh(a/4) < e^{-L/(2q)} < \tanh(a/4).
\]
A closed curve homotopic to $\alpha$ is obtained by concatenating $q$ radial segments between the two circles in Figure~\ref{domain}. The hyperbolic length of the radial segments goes to zero as $L \to \infty$ since the Euclidean length of the segments goes to zero while being bounded distance away from the $x$-axis. Hence $a \to 0$ as $L\to\infty$, which implies that $e^{-a/4}\tanh(a/4)$ and $\tanh(a/4)$ are both approximately $a/4$. In other words, there exists $C_1(L), C_2(L)$ that both converges to $4$ such that
\[
C_1(L) e^{-L/(2q)} < a < C_2(L) e^{-L/(2q)}.
\]
To complete the proof, we need to show that the length of $\alpha$ is shorter than any other simple closed geodesic for large $L$. In fact, the geodesic homotopic to $\alpha$ is the only simple closed geodesic whose length goes to zero as $L\to\infty$. Therefore the estimate holds for the systole length.
\end{proof}

The estimate given by Proposition~\ref{estimate} allows us to construct explicit sequence of sample points along the earthquake trajectory. The sequence of sample points will be used in the proof of the main theorem for a lower bound of $C_{\sqrt{t}}(X,\lambda)$.

\begin{prop}\label{ZERO}
For any $X \in \mathcal{T}(S_{1,1})$ and any irrational $\lambda \in \mathcal{ML}(S_{1,1})$, we have
\[
\liminf_{t\to\infty} \ell_{sys}E_t(X,\lambda) = 0.
\]
\end{prop}

\begin{proof}
It suffices to construct a sequence $\{t_n\}$ such that $\ell_{sys}E_{t_n}(X,\lambda)$ goes to zero as $n\to\infty$. Without loss of generality, we assume that $\ell_\lambda(X) = 1$. There exists a sequence of simple closed geodesics $\{\gamma_n\}$ such that $( \ell_{\gamma_n}(X))^{-1} \gamma_n  \to \lambda$ in $\mathcal{ML}(S_{1,1})$. Fix $\varepsilon > 0$ and $T > 0$. There exists $N=N(T,\varepsilon)$ such that $n \geq N$ implies the following.
\[
\left| \ell_{sys}E_t(X,\lambda) -\ell_{sys}E_t \left( X, \frac{\gamma_n}{\ell_{\gamma_n}(X)} \right) \right| < \varepsilon \text{ for all } 0<t<T.
\]

Let $\alpha$ be a closed geodesic that realizes $\ell_{sys}(X,\lambda)$ and $\gamma = \gamma_m$ for some $m \geq N$. We will assume that $m$ is large enough from here on, hence the length of $\gamma$ will also be large enough. We observe that there exists a pair $p_0,q_0$ such that $\alpha$ is the systole of $X_0 = (\ell_\gamma(X),p_0/q_0)$ in Fenchel-Nielsen coordinate with respect to $\gamma$. Let $X_1$ be $(\ell_\gamma(X),p_1/q_1)$ with $p_1/q_1$ be the least rational number larger than $p_0/q_0$ with $1 \leq q_1 \leq q_0$. We iteratively define $X_j$ from $X_{j-1}$ similarly and choose $t_j$ to correspond with $X_j$ on the earthquake flow. Let $k$ be such that $t_k \leq T \leq t_{k+1}$.

The choice of $X_j$'s along with the estimates in Proposition~\ref{estimate} imply that the finite sequence
\[
\{ \ell_{sys} E_{t_j}(X,( \ell_{\gamma}(X))^{-1} \gamma) \}_{j=1,\ldots,k}
\]
is decreasing. Therefore 
\[
\inf_{0<t<T} \ell_{sys}E_t(X,\lambda) < (4+\varepsilon) e^{-\ell_{\gamma}(X)/(2q_k)}.
\]
The value of $q_k$ uniquely depends on $p_0$, $q_0$, and $k$. Since the length of $\alpha$ is fixed, we use Proposition~\ref{estimate} to conclude that the ratio $\ell_{\gamma}(X) / q_0$ as a function of $m$ converges to a constant. Hence it suffices to consider the ratio $q_0/q_k$ and show that it goes to infinity as $m$ goes to infinity. 

We use some properties of continued fractions from \cite{Khi} in the rest of the proof. By construction, for $q_j\neq 1$, $p_j/q_j$ is a convergent of $p_0/q_0$. To be more specific, if $p_0/q_0$ is $[a_0; a_1, a_2, \ldots, a_n]$ in the usual convention of continued fraction expansion, then $p_j/q_j$ is $[a_0; a_1, a_2, \ldots, a_{h(j)}]$, where $h(j)$ is equal to $n-2j$ or $n-2j+1$ since only the odd convergents are greater than $p_0/q_0$.

The ratio $q_0/q_k$ depends on the continued fraction expansion of $p_0/q_0$. In particular, the ratio grows with $k$ unless $q_k=1$. We observe that $q_0/q_k$ goes to infinity as $k$ goes to infinity whenever $p_0/q_0$ converges to an irrational number as $m$ increases, which is a consequence of $\lambda$ being irrational. Therefore for any fixed $\varepsilon$, as $T$ increases, $m$ and $k$ go to infinity and that completes the proof.
\end{proof}

%%%%%%%%%%%%%%%%%%%%%%%%%%%%%%%%%%%%%%%%%%%%%%%%%%%%%%%%

\section{Discrete upper bound}\label{UPPER}

We include a discussion on the classical upper bound obtained by the Borel-Cantelli lemma for completion. The volume estimate of the thin-part for general surfaces $S_g$ can be obtained through Mirzakhani's generalized McShane's identity \cite{Mir2} which is beyond the scope of this paper. For the next proposition we need

\begin{theorem}[McShane identity]\label{McShane}
Let $X$ be a point in the Teichm\"{u}ller space of the once-punctured torus. Then we have
\[
\sum_{\gamma} \left( 1+e^{\ell_\gamma(X)} \right)^{-1} = \frac{1}{2},
\]
where $\ell_\gamma(X)$ is the length of the geodesic $\gamma$ with respect to $X$ and the sum is over all simple closed geodesics $\gamma$ on $X$.
\end{theorem}

\begin{prop}\label{THIN}
Let $\nu(S_{1,1})$ be the finite measure on $\mathcal{P}^1\mathcal{M}(S_{1,1})$. Then
\[
\nu(S_{1,1})\{ ([X],\lambda) \in \mathcal{P}^1\mathcal{M}(S_{1,1}) \mid \ell_{sys}(X,\lambda) < \varepsilon \} = O\left(\frac{\varepsilon}{\log \varepsilon}\right) \text{ as } \varepsilon\to 0.
\]
\end{prop}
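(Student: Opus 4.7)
The plan is to use Mirzakhani's disintegration of $\nu(S_{1,1})$ over $\mathcal{M}(S_{1,1})$ to reduce the problem to a Weil--Petersson integral of the Thurston mass of the unit ball in $\mathcal{ML}(S_{1,1})$, then carry out this integral in Fenchel--Nielsen coordinates in the cusp.

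Set $B(X) := \mu_{Th}\{\lambda \in \mathcal{ML}(S_{1,1}) : \ell_\lambda(X) \leq 1\}$. The construction of $\nu(S_{1,1})$ in \cite{Mir} realizes it as a fiber-bundle measure over $\mathcal{M}(S_{1,1})$ whose total mass on the fiber above $X$ is proportional to $B(X)$: writing a lamination radially as $\lambda = r\lambda_0$ with $\ell_{\lambda_0}(X) = 1$, one has $d\mu_{Th} = r\, dr\, d\sigma_X$ (since $\dim \mathcal{ML}(S_{1,1}) = 2$), so the conditional measure on the unit sphere has total mass $2B(X)$. Since $\ell_{sys}(X,\lambda)$ depends only on $X$, this gives
\[
\nu(S_{1,1})\{([X],\lambda) : \ell_{sys}(X) < \varepsilon\} = 2 \int_{\{\ell_{sys}(X) < \varepsilon\}} B(X)\, dV_{WP}(X).
\]

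For small $\varepsilon$ the integration region lies in the unique cusp of $\mathcal{M}(S_{1,1})$. A fundamental domain there is parameterized by Fenchel--Nielsen coordinates $(L,\tau)$ with respect to the systole $\gamma$, with $L \in (0,\varepsilon)$, $\tau \in [0,L)$ (a fundamental domain for the Dehn twist about $\gamma$, the parabolic stabilizer of this cusp), and $dV_{WP} = dL \wedge d\tau$. The essential input is the asymptotic $B(X) = O(1/(L \log(1/L)))$ as $L = \ell_{sys}(X) \to 0$. For this, I would invoke the collar lemma: the standard collar around $\gamma$ has width $\log(1/L) + O(1)$, so any $\lambda$ with $\ell_\lambda(X) \leq 1$ satisfies $i(\lambda,\gamma) = O(1/\log(1/L))$ while its weight along $\gamma$ is at most $O(1/L)$. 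In a finite collection of train-track charts covering $\mathcal{ML}(S_{1,1})$, in which $\mu_{Th}$ is Lebesgue in the weight coordinates, the unit ball fits inside a rectangle of sides $O(1/L)$ and $O(1/\log(1/L))$.

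Combining these,
\[
\nu(S_{1,1})\{\ell_{sys} < \varepsilon\} = O\!\left(\int_0^\varepsilon\!\int_0^L \frac{d\tau\, dL}{L \log(1/L)}\right) = O\!\left(\int_0^\varepsilon \frac{dL}{\log(1/L)}\right) = O\!\left(\frac{\varepsilon}{\log(1/\varepsilon)}\right),
\]
where the final asymptotic is the standard estimate $\int_0^\varepsilon dL/\log(1/L) \sim \varepsilon/\log(1/\varepsilon)$ (via L'H\^opital, or the exponential-integral expansion $E_1(T) \sim e^{-T}/T$). The main obstacle is the uniform control needed for the bound on $B(X)$: one must track the twist parameter $\tau$ across the decomposition of $\lambda$ into a component along $\gamma$ and a transverse component, and cover $\mathcal{ML}(S_{1,1})$ by finitely many train-track charts with uniform constants. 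Proposition~\ref{estimate} can be used at this step to render the length asymptotics along specific simple closed curves explicit, and combined with the collar lemma produces the rectangular bound on the unit ball.
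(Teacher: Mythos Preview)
Your proposal is correct and follows essentially the same route as the paper: disintegrate $\nu$ as Weil--Petersson times the Thurston mass $B(X)$ of the unit ball, show $B(X)\asymp 1/(L\log(1/L))$ when $L=\ell_{sys}(X)$ is small, and integrate over the cusp. The paper carries out the $B(X)$ estimate in polar coordinates rather than via a rectangular train-track bound, and it resolves the uniformity-in-$\tau$ issue you flag by choosing the transverse curve $\alpha$ to be the shortest curve crossing $\gamma$ (so that $R(\varepsilon)\le \ell_\alpha(X)\le R(\varepsilon)+\varepsilon$ independently of the twist); the final integral is then packaged via the McShane identity as $\int_0^\varepsilon \frac{\ell}{1+e^\ell}\cdot\frac{-1}{\ell\log\ell}\,d\ell$, which matches your $\int_0^\varepsilon dL/\log(1/L)$.
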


\begin{proof}
Recall the computation from \cite{Mir2} showing how the volume of $\mathcal{M}_{1,1}$ is computed using Theorem~\ref{McShane} in \cite{McS}.

\[
\operatorname{Vol}(\mathcal{M}_{1,1}) = 2\int_0^\infty \ell f(\ell)\ d\ell = 2 \int_0^\infty \frac{\ell}{1+e^\ell}\ d\ell = \frac{\pi^2}{6}.
\]

In \cite{Mir} Mirzakhani showed that the earthquake-invariant measure $\nu$ restricted to the space of unit-length measured laminations is equal to the Thurston volume of the unit ball in the space of measured laminations. Therefore we can compute the volume of the $\varepsilon$-thin part of $\mathcal{P}^1\mathcal{M}(S_{1,1})$ with the help of the following claim.

\noindent \textbf{Claim.} There exist constants $C_1,C_2 > 0$ such that for any $X \in \mathcal{T}(S_{1,1})$ with $\ell_{sys}(X)=\varepsilon$, 
\[
\frac{-C_1}{\varepsilon\log\varepsilon} \leq \mu_{Th} \{ \lambda \in \mathcal{ML}(S_{1,1}) \mid \ell_\lambda(X) \leq 1 \} \leq \frac{-C_2}{\varepsilon\log\varepsilon}.
\]

\noindent \textit{Proof of Claim.}
Let $\gamma$ be a systole of $X$. The Fenchel-Nielsen coordinate of $X$ with respect to $\gamma$ is $(\varepsilon, \tau)$ for some $\tau$. Let $\alpha$ be a simple closed curve that intersects $\gamma$ once. Then $\lambda \in \mathcal{ML}(S)$ has coordinate $(\iota(\lambda,\gamma), \iota(\lambda,\alpha))$ where $\iota(\cdot,\cdot)$ is the algebraic intersection number with some fixed orientation.

The Thurston volume can be computed by a double integral of the characteristic function of the unit ball in $\mathcal{ML}(S_{1,1})$. Since the length of $\lambda$ is linear with respect to its coordinate, we can rewrite the integral in polar coordinates.
\[
\mu_{Th} \{ \lambda \in \mathcal{ML}(S) \mid \ell_\lambda(X) \leq 1 \} = \int_0^{\pi} \int_0^{r(\theta)} rdrd\theta,
\]
where $r(\theta)$ is the reciprocal of the length of the measured lamination with coordinate $(\cos\theta, \sin\theta)$.

We can choose $\alpha$ such that $\ell_\alpha(X)$ is bounded between $R(\varepsilon)$ and $R(\varepsilon)+\varepsilon$, where $R(\varepsilon)$ is as defined in the proof of Proposition~\ref{estimate}. The length of the measured lamination $(\cos\theta, \sin\theta)$ is bounded between $\max\{R(\varepsilon)|\cos\theta|, \varepsilon|\sin\theta|\}$ and $(R(\varepsilon)|\cos\theta| + \varepsilon|\sin\theta|)$. The integral 
\[
\int_0^{\pi} \frac{1}{(R(\varepsilon)|\cos\theta| + \varepsilon|\sin\theta|)^2}d\theta
\]
is estimated by the standard method of breaking it into pieces at the points when $R(\varepsilon)|\cos\theta| = \varepsilon|\sin\theta|$. Finally we estimate $R(\varepsilon)$ by $\log(1/\varepsilon)$ when $\varepsilon$ is small.

That finishes the proof of the claim. Finally, we consider the integral below and standard computation will complete the proof.
\[
\int_0^\varepsilon \frac{\ell}{1+e^\ell} \cdot \frac{-1}{\ell \log \ell} \ d\ell = O\left(\frac{\varepsilon}{\log \varepsilon}\right) \text{ as } \varepsilon\to 0.
\]
\end{proof}

The following proposition is the result of a standard argument using the Borel-Cantelli lemma along with the volume of the thin part as stated in Proposition~\ref{THIN}. Recall the easier part of the Borel-Cantelli lemma.

\begin{theorem}[Borel-Cantelli lemma]\label{BClemma}
Let $(\mathfrak{X},\mathcal{F},\mu)$ be a probability space, where $\mathfrak{X}$ is the sample space, $\mathcal{F}$ is a set of events, and $\mu$ is a probability measure. Let $X_n : \mathfrak{X} \to \{0,1\}$ be a sequence of random variables with $\mu(x\in \mathfrak{X} : X_n(x)=1) = p_n$.
\[
\text{If } \sum_{n=1}^\infty p_n < \infty, \text{ then } \mu\left( \sum_{n=1}^\infty X_n = \infty \right) = 0.
\]
\end{theorem}

\begin{prop}\label{UPPERB}
Let $S = S_{1,1}$ and $t(n)$ be any fixed sequence of real numbers. For almost every $(X,\lambda) \in \mathcal{T}(S) \times \mathcal{MF}(S)$, we have
\[
\limsup_{n\to\infty} \frac{-\log \ell_{sys}(E_{t(n)}(X,\lambda))}{\log n} \leq 1,
\]
where $\ell_{sys}$ is the systole function and $E_{t(n)}$ is the earthquake function for time $t(n)$.
\end{prop}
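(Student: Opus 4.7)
The plan is a direct application of the Borel-Cantelli lemma, with the thin-part volume estimate of Proposition~\ref{THIN} providing the summable upper bound needed to conclude. First I would reduce the claim to a statement on $\mathcal{P}^1\mathcal{M}(S_{1,1})$: the function $\ell_{sys}\circ E_{t(n)}$ and the entire ratio in the statement are invariant under the $\operatorname{Mod}(S_{1,1})$-action, and rescaling $\lambda$ only reparametrizes time by a constant factor, which does not affect a $\limsup$ with denominator $\log n$. So it suffices to prove the almost-sure upper bound with respect to the finite measure $\nu(S_{1,1})$ on the quotient.

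Fix $\varepsilon>0$ and set
\[
A_n^\varepsilon=\bigl\{([X],\lambda)\in\mathcal{P}^1\mathcal{M}(S_{1,1}) : \ell_{sys}(E_{t(n)}(X,\lambda))<n^{-(1+\varepsilon)}\bigr\}.
\]
Because $E_{t(n)}$ preserves $\nu(S_{1,1})$, we have
\[
\nu(A_n^\varepsilon)=\nu\bigl\{([X],\lambda) : \ell_{sys}(X,\lambda)<n^{-(1+\varepsilon)}\bigr\},
\]
which by Proposition~\ref{THIN} is $O\bigl(n^{-(1+\varepsilon)}/\log n\bigr)$. Since $\sum_n n^{-(1+\varepsilon)}/\log n$ converges for every $\varepsilon>0$, the Borel-Cantelli lemma yields $\nu(\limsup_n A_n^\varepsilon)=0$. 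Hence for $\nu$-almost every $([X],\lambda)$ only finitely many of the events $A_n^\varepsilon$ occur, which by definition gives
\[
\limsup_{n\to\infty}\frac{-\log\ell_{sys}(E_{t(n)}(X,\lambda))}{\log n}\leq 1+\varepsilon.
\]

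To finish I would intersect the corresponding full-measure sets over a countable sequence $\varepsilon_k\downarrow 0$ to obtain the bound by $1$ on a set of full $\nu$-measure, and lift this to $\mathcal{T}(S)\times\mathcal{MF}(S)$. There is no substantive obstacle here: the conceptual content is packaged into Proposition~\ref{THIN} together with the $E_t$-invariance of $\nu$, so the only delicate step is bookkeeping around the descent between the bundle and the quotient. I would note in passing that even the weaker bound $\nu\{\ell_{sys}<\varepsilon\}=O(\varepsilon)$ would already suffice for the upper bound by $1$ proved here; the extra $1/\log\varepsilon$ factor in Proposition~\ref{THIN} will presumably be what pins down the sharp constant $1$ (as opposed to a larger value) in the main theorem.
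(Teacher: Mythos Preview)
Your proof is correct and follows essentially the same Borel--Cantelli argument as the paper: both use the $E_t$-invariance of $\nu(S_{1,1})$ together with Proposition~\ref{THIN} to bound $\nu\{\ell_{sys}(E_{t(n)}(\cdot))<n^{-(1+\delta)}\}$ by a summable sequence, then let $\delta\downarrow 0$ along a countable sequence. Your reduction from $\mathcal{T}(S)\times\mathcal{MF}(S)$ to $\mathcal{P}^1\mathcal{M}(S_{1,1})$ is in fact more explicit than the paper's (which silently works on the quotient); your closing aside about the role of the $1/\log\varepsilon$ factor is slightly off, though---the constant $1$ in Theorem~\ref{MAIN} is pinned down not by that extra factor but by the continued-fraction lower bound in Section~\ref{LOWER}, and as you correctly note the upper bound here already goes through with the cruder $O(\varepsilon)$ estimate.
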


\begin{proof}
We use the setting where $\mathfrak{X}$ is $\mathcal{P}^1\mathcal{M}(S_{1,1})$, $\mu$ is $\nu(S_{1,1})$, and $\mathcal{F}$ is the set of measurable sets. Let $\{ Y_n \}$ be random variables defined by $-\log \ell_{sys}(E_{t(n)}(\ \cdot\ ))$. The value $\mu(Y_n > -\log \varepsilon)$ stands for the measure of the set of points $([X],\lambda)$ in $\mathcal{P}^1\mathcal{M}(S_{1,1})$ such that the systole length is shorter than $\varepsilon$ after earthquaking by time $t(n)$ in the direction $\lambda$. Using the fact that the measure $\nu$ is invariant under earthquake flows and Proposition~\ref{THIN}, 
\[
\mu(Y_n > -\log \varepsilon) = O\left( \frac{\varepsilon}{\log \varepsilon} \right) \text{ as } \varepsilon\to 0.
\]
Let $\{r_n\}$ be a sequence of positive real numbers and
\[
X_n = \begin{cases} 1 & Y_n > r_n, \\ 0 & \text{otherwise}. \end{cases}
\]
Our goal is to show that the limsup of $Y_n/\log n$ is bounded above. By the definition of limsup, it suffices to show that for all $\delta>0$ and almost every $([X],\lambda)$, $Y_n > (1+\delta)\log n$ for only a finite number of $n$. This follows from setting $r_n = (1+\delta)\log n$ and applying Theorem~\ref{BClemma}. A quick computation shows that
\[
\sum_{n=1}^\infty p_n = \sum_{n=1}^\infty \mu\left(Y_n > (1+\delta)\log n\right) = \sum_{n=1}^\infty O\left(\frac{n^{-1-\delta}}{(1+\delta)\log n}\right) < \infty.
\]
Hence $\mu\left( \sum_{n=1}^\infty X_n = \infty \right) = 0$, which is equivalent to $\mu\left( \sum_{n=1}^\infty X_n < \infty \right) = 1$, and this implies that the limsup of $Y_n/\log n$ is bounded above by 1 almost everywhere.
\end{proof}

\noindent \textbf{Remark.} It is important to note that $t(n)$ is a sequence chosen prior to the almost everywhere property. For example, a constant sequence would satisfy the upper bound trivially for every $([X],\lambda)$. The choice of $t(n) = n$ were used in classical logarithmic law papers \cites{Sul,Mas} and we will show that the same choice does not hold for the earthquake flow.

%%%%%%%%%%%%%%%%%%%%%%%%%%%%%%%%%%%%%%%%%%%%%%%%%%%%%%%%

\section{Main Theorem}\label{PROOF}

We now prove the main theorem. The general notation is as in the proof of Proposition~\ref{ZERO}.

\begin{mainlemma}
Let $S$ be the once-punctured torus or the four-punctured sphere. For almost every $([X],\lambda) \in \mathcal{P}^1\mathcal{M}(S)$ with respect to $\nu(S)$, we have
\[
\limsup_{t\to\infty} \frac{-\log \ell_{sys}(E_t(X,\lambda))}{\sqrt{t}} = \infty.
\]
\end{mainlemma}

\begin{proof}
We consider $\lambda$ with irrational slope and a sequence $\{\gamma_n\}$ with $( \ell_{\gamma_n}(X))^{-1} \gamma_n  \to \lambda$. Let $\alpha$ be a closed geodesic that realizes $\ell_{sys}(X,\lambda)$ and $\gamma = \gamma_m$ for some $m \geq N$ large enough. 

Recall from the proof of Proposition~\ref{ZERO} that for each fixed $\varepsilon>0$, $T>0$, and $m>N(T,\varepsilon)$, there exists a finite sequence $\{X_j\}_{j=0,\ldots,k}$ that are candidates along $E_t(X,\lambda), 0<t<T,$ that have small systole length. The candidates are described by their Fenchel-Nielsen coordinates with respect to $\gamma_m$ that corresponds with the sequence of rational twists $\{p_j/q_j\}_{j=0,\ldots,k}$ arising as convergents. Finally for each rational twist there is a corresponding time, hence we have a sequence $\{t_j\}_{j=0,\ldots,k}$. 

Proposition~\ref{estimate} shows that $-\log \ell_{sys}(E_{t_j}(X,\lambda)) $ is approximately $\ell_{\gamma}(X) / (2q_j)$. To finish the proof it suffices to estimate $t_j$ for almost every $(X,\lambda)$. For the earthquake flow $E_{t}(X,( \ell_{\gamma}(X))^{-1} \gamma)$, we have 
\[
t_j-t_0 = \left(\frac{p_j}{q_j} - \frac{p_0}{q_0}\right) (\ell_\gamma(X))^2,
\] 
where $t_0$, corresponding to $X_0$, could be negative. The factor of $(\ell_\gamma(X))^2$ is the result of our normalization of the twist parameter and the weight on $\gamma$.

The value of $t_0$ is bounded by half the maximum time that $\alpha$ realizes the systole length along the earthquake flow. We use $t_1-t_0$ as an upper bound for $|t_0|$. Recall that by construction, for $q_j\neq 1$, $p_j/q_j$ is a convergent of $p_0/q_0$. To be more specific, if $p_0/q_0$ is $[a_0; a_1, a_2, \ldots, a_n]$ in the usual convention of continued fraction expansion, then $p_j/q_j$ is $[a_0; a_1, a_2, \ldots, a_{h(j)}]$, where $h(j)$ is equal to $n-2j$ or $n-2j+1$ since only the odd convergents are greater than $p_0/q_0$. Again using properties of continued fractions from \cite{Khi} and notation from the proof of Proposition~\ref{ZERO}, we have the following bound that compares convergents.
\[
\frac{p_j}{q_j} - \frac{p_0}{q_0} < \frac{1}{a_{h(j)+1}q_j^2}.
\]

Combining our approximations above, first we have 
\[
\frac{-\log \ell_{sys}(E_{t_j}(X,\lambda))}{\sqrt{t_j}} >  \left(\frac{\ell_\gamma(X)}{2q_j} - \varepsilon\right) t_j^{-1/2}.
\]
Next we use $t_j = (t_j - t_0) + t_0$ to obtain
\[
\left(\frac{\ell_\gamma(X)}{2q_j} - \varepsilon\right) t_j^{-1/2} > \left(\frac{\ell_\gamma(X)}{2q_j} - \varepsilon\right) \left[ \frac{(\ell_\gamma(X))^2}{a_{h(1)+1}q_1^2} + \frac{(\ell_\gamma(X))^2}{a_{h(j)+1}q_j^2}\right]^{-1/2}.
\]
We use the fact that $q_1$ and $\ell_\gamma(X)$ are much larger than $q_j$ to simplify
\[
\frac{-\log \ell_{sys}(E_{t_j}(X,\lambda))}{\sqrt{t_j}} > \left(\frac{1}{2} - \frac{\varepsilon q_j}{\ell_\gamma(X)}\right) \left[ \frac{1}{a_{h(1)+1}(q_1/q_j)^2} + \frac{1}{a_{h(j)+1}}\right]^{-1/2}
\]
into 
\[
\frac{-\log \ell_{sys}(E_{t_j}(X,\lambda))}{\sqrt{t_j}} > \frac{1}{2}\sqrt{a_{h(j)+1}} - \delta
\]
for $j$ large enough and for some small $\delta$. 

As $m$ goes to infinity, the slope of $\gamma_m$ with respect to the chosen basis converges to some irrational number $s$, the slope of $\lambda$. Recall that $p_0/q_0$ depends on $X$ and a choice of a systole curve $\alpha$. As a function of $m$, $p_0/q_0$ converges to some value $\sigma = \sigma(s)$. In fact, for any fixed $j$, the sequence $\{p_j/q_j\}$ converges to $\sigma$ as $m\to\infty$. This comes from the observation that any finite sequence $\{p_j(m)/q_j(m)\}_{j=1}^k$ is a finite subsequence of the convergents of $\sigma$ for $m$ large enough. 

By choosing $T$ arbitrarily large and $\varepsilon$ arbitrarily small, we get
\[
\limsup_{t\to\infty} \frac{-\log \ell_{sys}(E_{t}(X,\lambda))}{\sqrt{t}} > \limsup_{j\to\infty} \frac{1}{2}\sqrt{a_{h(j)+1}(\sigma)} - \varepsilon,
\]
where $a_n(\sigma)$ is the partial quotients of $\sigma$. The right-hand-side is infinity for almost every real number $\sigma$.

The final step is to analyze the function $\sigma : \mathbb{R}\to \mathbb{R}$ that depends only on $X$ and a choice of a systole curve $\alpha$. Rational slopes would be mapped to the rational twists in the sense of $p_0/q_0$ with respect to $\gamma_m$. To be explicit, suppose that $\gamma_m = \gamma(a,b)$, $\alpha = \gamma(c,d)$, and $\sigma(a/b) = p_0/q_0$. The value of $q_0$ is equal to the intersection number of $\gamma(a,b)$ and $\gamma(c,d)$, that is, $q_0 = |ad-bc|$. The value of $p_0$ depends on $X$ since it is related to the choice of twist parameter. The function $\sigma$ is a continuous bijection over the rational numbers and irrational numbers are mapped by continuity. 

For any integer $q$ we consider the set of integer pairs $(a,b)$ such that $|ad-bc|=q$ for $(c,d)$ fixed. This set is mapped to the set $p/q$ under $\sigma$. Hence the derivative of $\sigma$ is controlled by the choice of $(c,d)$, which is based on a choice of basis for $\pi_1(S_{1,1})$.

In an open neighborhood of a fixed $(X,\lambda)$, we can pick a suitable choice of basis for the fundamental group of $S_{1,1}$ so that $\sigma$ is an absolutely continuous function over the set of slopes for $\lambda$ in the neighborhood. This implies that sets of full measure are mapped to sets of full measure under $\sigma$. Therefore for almost every slope $s$, $\sigma(s)$ satisfies 
\[
\limsup_{j\to\infty} a_{h(j)+1}(\sigma(s)) = \infty.
\]

Therefore by applying Fubini's Theorem to the measure of the set of $([X],\lambda)$ such that $C_{\sqrt{t}} = \infty$, we obtain that 
\[
\limsup_{t\to\infty} \frac{-\log \ell_{sys}(E_t(X,\lambda))}{\sqrt{t}} = \infty
\]
for almost every $([X],\lambda) \in \mathcal{P}^1\mathcal{M}(S)$ with respect to $\nu(S)$.
\end{proof}

The special cases when $\sigma$ is equal to the golden ratio or when the tail of the continued fraction expansion is all $1$'s give us the corollary below.

\begin{cor}
For all $([X],\lambda) \in \mathcal{P}^1\mathcal{M}(S)$, 
\[
\limsup_{t\to\infty} \frac{-\log \ell_{sys}(E_t(X,\lambda))}{\sqrt{t}} \geq \frac{\varphi^{3}}{2\sqrt{\varphi^4-1}}, \ \varphi = \frac{1+\sqrt{5}}{2},
\]
if and only if $\lambda$ has irrational slope. Furthermore the inequality is sharp.
\end{cor}

\begin{proof}
If $\lambda$ has rational slope, then 
\[
\limsup_{t\to\infty} \big[ -\log \ell_{sys}(E_t(X,\lambda)) \big]
\]
is a constant that depends only on $\ell_\lambda(X)$. 

For irrational $\lambda$ we consider $\sigma(\lambda)$ and its continued fraction expansion $a_m/b_m$. The value of $C_{\sqrt{t}}$ is larger for faster growing $b_m$'s, i.e., faster decreasing $q_j$'s. We can focus on the case when $\sigma(\lambda)$ is equal to the golden ratio $\varphi$ since it is the irrational number with the slowest possible growth of $b_m$.

We estimate $C_{\sqrt{t}}$ similar to the steps in the proof of Theorem~\ref{LEMMA}. We use two properties of $\varphi$.
\[
\frac{p_j}{q_j} - \frac{p_0}{q_0}= \frac{1}{q_j \cdot q_{j-2}} + \frac{1}{q_{j-2} \cdot q_{j-4}} + \cdots
\]
where the last term is either $1/q_0q_2$ or $1/q_0q_1$. The ratio between values of $q_j$ is controlled.
\[
\lim_{m\to\infty} \frac{b_{m+c}}{b_m} = \varphi^c.
\]
Combining the approximations all together we have
\[
\frac{-\log \ell_{sys}(E_{t_j}(X,\lambda))}{\sqrt{t_j}} \approx \frac{\ell_{\gamma}(X) / (2q_j)}{\sqrt{(\ell_\gamma(X))^2\cdot \sum_{i=1}^{\lceil j/2\rceil} \frac{1}{q_j^2} \left( \varphi^{2-4i} \right)}} = \frac{\varphi}{2}\sqrt{\frac{1-\varphi^{-2j}}{1-\varphi^{-4}}}.
\]
As $q_j$'s and $j$ become larger, we get our desired result for $\sigma(\lambda) = \varphi$.
\[
\limsup_{t\to\infty} \frac{-\log \ell_{sys}(E_t(X,\lambda))}{\sqrt{t}} = \frac{\varphi^{3}}{2\sqrt{\varphi^4-1}}.
\]
\end{proof}

%%%%%%%%%%%%%%%%%%%%%%%%%%%%%%%%%%%%%%%%%%%%%%%%%%%%%%%%

\begin{bibdiv}
\begin{biblist}

\bib{Ath}{article}{
   author={Athreya, Jayadev S.},
   title={Cusp excursions on parameter spaces},
   journal={J. Lond. Math. Soc. (2)},
   volume={87},
   date={2013},
   number={3},
   pages={741--765},
   issn={0024-6107},
   review={\MR{3073674}},
   doi={10.1112/jlms/jds074},
}

\bib{FaLaPo}{collection}{
   author={Fathi, Albert},
   author={Laudenbach, Fran{\c{c}}ois},
   author={Po{\'e}naru, Valentin},
   title={Thurston's work on surfaces},
   series={Mathematical Notes},
   volume={48},
   note={Translated from the 1979 French original by Djun M. Kim and Dan
   Margalit},
   publisher={Princeton University Press, Princeton, NJ},
   date={2012},
   pages={xvi+254},
   isbn={978-0-691-14735-2},
   review={\MR{3053012}},
}

\bib{Ker}{article}{
   author={Kerckhoff, Steven P.},
   title={The Nielsen realization problem},
   journal={Ann. of Math. (2)},
   volume={117},
   date={1983},
   number={2},
   pages={235--265},
   issn={0003-486X},
   review={\MR{690845 (85e:32029)}},
   doi={10.2307/2007076},
}

\bib{Khi}{book}{
   author={Khinchin, A. Ya.},
   title={Continued fractions},
   edition={Translated from the third (1961) Russian edition},
   note={With a preface by B. V. Gnedenko;
   Reprint of the 1964 translation},
   publisher={Dover Publications, Inc., Mineola, NY},
   date={1997},
   pages={xii+95},
   isbn={0-486-69630-8},
   review={\MR{1451873 (98c:11008)}},
}

\bib{Mas}{article}{
   author={Masur, Howard},
   title={Logarithmic law for geodesics in moduli space},
   conference={
      title={},
      address={G\"ottingen, 1991/Seattle, WA},
      date={1991},
   },
   book={
      series={Contemp. Math.},
      volume={150},
      publisher={Amer. Math. Soc.},
      place={Providence, RI},
   },
   date={1993},
   pages={229--245},
   review={\MR{1234267 (94h:32038)}},
   doi={10.1090/conm/150/01293},
}

\bib{McS}{article}{
   author={McShane, Greg},
   title={Simple geodesics and a series constant over Teichmuller space},
   journal={Invent. Math.},
   volume={132},
   date={1998},
   number={3},
   pages={607--632},
   issn={0020-9910},
   review={\MR{1625712 (99i:32028)}},
   doi={10.1007/s002220050235},
}

\bib{MinWei}{article}{
   author={Minsky, Yair},
   author={Weiss, Barak},
   title={Nondivergence of horocyclic flows on moduli space},
   journal={J. Reine Angew. Math.},
   volume={552},
   date={2002},
   pages={131--177},
   issn={0075-4102},
   review={\MR{1940435 (2004d:32012)}},
   doi={10.1515/crll.2002.088},
}

\bib{Mir}{article}{
   author={Mirzakhani, Maryam},
   title={Ergodic theory of the earthquake flow},
   journal={Int. Math. Res. Not. IMRN},
   date={2008},
   number={3},
   pages={Art. ID rnm116, 39},
   issn={1073-7928},
   review={\MR{2416997 (2009b:32020)}},
   doi={10.1093/imrn/rnm116},
}

\bib{Mir2}{article}{
   author={Mirzakhani, Maryam},
   title={Simple geodesics and Weil-Petersson volumes of moduli spaces of
   bordered Riemann surfaces},
   journal={Invent. Math.},
   volume={167},
   date={2007},
   number={1},
   pages={179--222},
   issn={0020-9910},
   review={\MR{2264808 (2007k:32016)}},
   doi={10.1007/s00222-006-0013-2},
}

\bib{Sul}{article}{
   author={Sullivan, Dennis},
   title={Disjoint spheres, approximation by imaginary quadratic numbers,
   and the logarithm law for geodesics},
   journal={Acta Math.},
   volume={149},
   date={1982},
   number={3-4},
   pages={215--237},
   issn={0001-5962},
   review={\MR{688349 (84j:58097)}},
   doi={10.1007/BF02392354},
}

\bib{Thu}{article}{
   author={Thurston, William P.},
   title={Earthquakes in two-dimensional hyperbolic geometry},
   conference={
      title={Low-dimensional topology and Kleinian groups (Coventry/Durham,
      1984)},
   },
   book={
      series={London Math. Soc. Lecture Note Ser.},
      volume={112},
      publisher={Cambridge Univ. Press},
      place={Cambridge},
   },
   date={1986},
   pages={91--112},
   review={\MR{903860 (88m:57015)}},
}

\end{biblist}
\end{bibdiv}

\end{document}